\theoremstyle{plain}
\newtheorem{thm}[subsection]{Theorem}
\newtheorem{cor}[subsection]{Corollary}
\theoremstyle{definition}
\newtheorem{definition}[subsection]{Definition}
\newtheorem{pb}[subsection]{Problem}
\numberwithin{equation}{section} \setcounter{tocdepth}{1}
\begin{document}
\title [\texttt{Extremal Triangular Chain Graphs for Bond Incident Degree (BID) Indices}]{Extremal Triangular Chain Graphs for Bond Incident Degree (BID) Indices}
\author{Akbar Ali$^{\dag,\ddag}$ and Akhlaq Ahmad Bhatti$^{\dag}$}
 \address{$^{\dag}$Department of Mathematics \\ National University of Computer and Emerging Sciences, B-Block, Faisal Town, Lahore, Pakistan.}
 \address{$^{\ddag}$Department of Mathematics \\ University Of Gujrat, Gujrat, Pakistan.}

\email{akbarali.maths@gmail.com,akhlaq.ahmad@nu.edu.pk}

\subjclass[2010]{Primary: 05C07; Secondary: 05C35, 92E10.}

\keywords{topological index, bond incident degree index, $k$-polygonal chain graph, triangular chain graph}

\begin{abstract}

A general expression for calculating the bond incident degree (BID) indices of certain triangular chain graphs is derived. The extremal triangular chain graphs with respect to several well known BID indices are also characterized over a particular collection of triangular chain graphs.
\end{abstract}
\maketitle
\section{\textbf{Introduction}}


All the graphs considered in the present study are finite, undirected, simple and connected. The vertex set and edge set of a graph $G$ will be denoted by $V(G)$ and $E(G)$ respectively. Undefined notations and terminologies from (chemical) graph theory can be found in \cite{EsBo2013,Ha69,Tr92}.

According to Todeschini and Consonni \cite{ToCo2000} ``\textit{molecular descriptor} is the final result of a logical and mathematical procedure which transforms chemical information encoded within a symbolic representation of a molecule into an useful number or the result of some standardized experiment''. A molecule can be represented by a graph in which the vertices correspond to atoms while the edges represent covalent bonds between atoms of the molecule \cite{2}.  A graph-based molecular descriptor is simply known as a \textit{topological index} \cite{EsBo2013}. In graph theoretical notation, topological indices are the numerical parameters of a graph which are invariant under graph isomorphisms. These indices are often used to model the physicochemical properties of chemical compounds in quantitative structure-property relation (QSPR) and quantitative structure-activity relation (QSAR) studies \cite{De99,d2,f1,Tr92}. A considerable amount of such indices can be represented as the sum of edge contributions of the graph \cite{VuDe10,Vu10}. Furthermore, in many cases these edge contributions depend exclusively on the degrees of end vertices of the observed edge \cite{VuDu11}. These kinds of indices are called \textit{bond incident degree indices} \cite{VuDu11} (these indices form a subclass of the class of all (vertex) degree based topological indices \cite{d4,f5,g1,r1}) whose general form \cite{hollas,VuDe10} is:
\begin{equation}\label{z}
TI=TI(G)=\displaystyle\sum_{uv\in E(G)}f(d_{u},d_{v})=\displaystyle\sum_{1\leq a\leq b\leq \Delta(G)}x_{a,b}(G).\theta_{a,b} \ ,
\end{equation}
where $uv$ is the edge connecting the vertices $u$ and $v$ of the graph $G$, $d_{u}$ is the degree of the vertex $u$, $E(G)$ is the edge set of $G$, $\Delta(G)$ is the maximum degree in $G$, $\theta_{a,b}$ is a non-negative real valued function depending on $a,b$, and $x_{a,b}(G)$ is the number of edges in $G$ connecting the vertices of degrees $a$ and $b$. Many well known topological indices such as the Randi$\acute{c}$ index \cite{r3}, atom-bond connectivity index \cite{e3}, sum-connectivity index \cite{z2}, first geometric-arithmetic index \cite{v1}, augmented Zagreb index \cite{f4}, Albertson index \cite{a1}, harmonic index \cite{f2}, second Zagreb index \cite{g22}, modified second Zagreb index \cite{n1} and natural logarithm of multiplicative sum Zagreb index \cite{e1} are the special cases of (\ref{z}). Moreover, a certain class of bond incident degree (BID) indices was proposed and several indices from this class were examined for their chemical applicability in \cite{VuDe10}. Besides, many indices of the form (\ref{z}) exist in the literature, for example see the review \cite{g1}, articles \cite{AA1,AA3,d4,f5,gutman,g2,VuDe10,VuDu11} and related references cited therein.

A $k$-polygonal system is a connected geometric figure obtained by concatenating congruent regular $k$-polygons side to side in a plane in such a way that the figure divides the plane into one infinite (external) region and a number of finite (internal) regions, and all internal regions must be congruent regular $k$-polygons. For $k=3,4,6$, the $k$-polygonal system corresponds to triangular animal \cite{g12}, polyominoes \cite{g11}, benzenoid system \cite{gutman89} respectively. In a $k$-polygonal system, two polygons are said to be adjacent if they share a side. The characteristic graph (or dualist or inner dual) of a given $k$-polygonal system consists of vertices corresponding to $k$-polygons of the system; two vertices are adjacent if and only if the corresponding $k$-polygons are adjacent. A $k$-polygonal system whose characteristic graph is the path graph is called $k$-polygonal chain. A $k$-polygonal chain can be represented by a graph (called $k$-polygonal chain graph) in which the edges represent sides of a polygon while the vertices correspond to the points where two sides of a polygon meet. In the rest of paper, by the term \textit{$k$-polygonal chain} we actually mean \textit{$k$-polygonal chain graph}.


In recent years, a significant amount of research has been devoted to solve the problems of finding closed form formulae for different BID indices of $4$-polygonal (polyomino) chains and characterizing the extremal polyomino chains with respect to the aforementioned indices over the set of all polyomino chains with fixed number of squares (for example, see \cite{AA2,AA3,An2,deng,z} and related references cited therein). In addition, extremal $5$-polygonal (pentagonal) and $6$-polygonal (benzenoid) chains for various well known BID indices were recently characterized in \cite{AA4} and \cite{r1} respectively. Furthermore, for the $k$-polygonal chains (where $k\geq7$), the above mentioned problems are rather easy to solve and hence a simple solution can be provided. However, for $k=3$ the problems under consideration are not easy and thence it is natural to attempt these problems. We will attempt these problems for a certain collection of triangular chains. To state these problems in a more precise way, we need to recall some basic definitions and terminologies concerning triangular chains. A triangular chain in which every vertex has degree at most four is called \textit{linear triangular chain}. A graph $H$ is said to be a subgraph of the graph $G$ if $V(H)\subseteq V(G)$ and $E(H)\subseteq E(G)$. A subgraph $H$ of graph $G$ is said to be induced subgraph if whenever $u,v\in V(H)$ and $uv\in E(G)$ then $uv\in E(H)$. A part of a triangular chain $T_{n}$ is said to be \textit{segment} if it forms an induced subgraph which is the maximal linear triangular chain in $T_{n}$. A triangle in a triangular chain is said to be \textit{terminal} (respectively \textit{nonterminal}) if it is adjacent with only one (respectively two) other triangle(s). A segment containing terminal triangle(s) is called \textit{terminal segment} and a segment which is not terminal is known as \textit{nonterminal segment}. Let $T_{n}$ has $s$ segments $S_{1}, S_{2},S_{3},...,S_{s}$. The number of triangles in a segment $S_{i}$ (where $1\leq i\leq s$) is called its \textit{length} and is denoted by $l(S_{i})$ (or simply by $l_{i}$ for the sake of brevity). An $s$-tuple $(a_{1},a_{2},...,a_{s})$ is said to be \textit{length vector} of $T_{n}$ if and only if $a_{i}=l_{i}$ for all $i$ where $1\leq i\leq s$. If $(a_{1},a_{2},...,a_{s})$ is a length vector of $T_{n}$ and $s\geq3$ then we assume that $a_{1},a_{s}$ are the lengths of terminal segments. All the segments of a certain triangular chain are shown in the Figure \ref{f0} and the length vector of this chain is $(6,5,4,3)$.
\begin{figure}
    \centering
    \includegraphics[width=1.6in, height=1.45in]{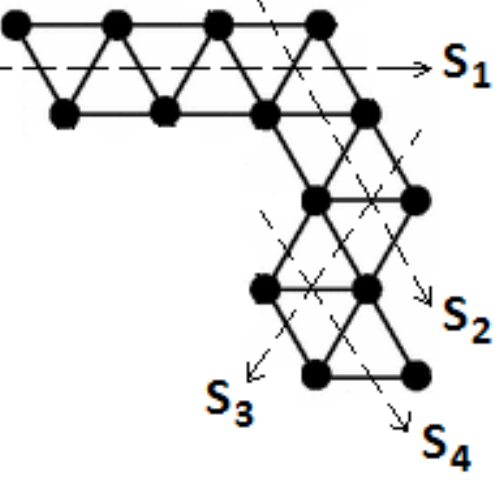}
    \caption{All the segments of a triangular chain.}
    \label{f0}
\end{figure}
Let $E(S_{i})$ (where $1\leq i\leq s$) be the set of all edges of the segment $S_{i}$. Denote by $\mathfrak{T}_{n}$ the collection of all those triangular chains with $n\geq4$ triangles in which every vertex has degree at most five. Note that the graph given in Figure \ref{f1} is not a triangular animal and hence not a triangular chain because one of its interior regions is not a triangle. Now, we can state the main problems of the current study in a more precise form:

\begin{pb}
If $T_{n}\in\mathfrak{T}_{n}$ is a triangular chain with length vector $(l_{1},l_{2},...,l_{s})$, then determine $TI({T}_{n})$.
\end{pb}

\begin{pb}
Among all the triangular chains in the collection $\mathfrak{T}_{n}$, find those for which $TI$ attains its maximum and minimum value.
\end{pb}

The main motivation for considering the triangular chains in the current study, comes from the fact that for every triangular chain $T_{n}$ there exist a benzenoid system whose characteristic graph is isomorphic to $T_{n}$. Hence, the collection $\mathfrak{T}_{n}$ is actually a subclass of the class of all characteristic graphs of benzenoid system.

\section{\textbf{General Expression for BID Indices of Triangular Chains}}

Let $T_{n}\in\mathfrak{T}_{n}$ and suppose that $T_{n}$ has $s$ segments $S_{1}, S_{2},S_{3},...,S_{s}$. In order to obtain the main result of this section, we need to define some structural parameters as follows:
\begin{figure}
    \centering
    \includegraphics[width=1.35in, height=1.2in]{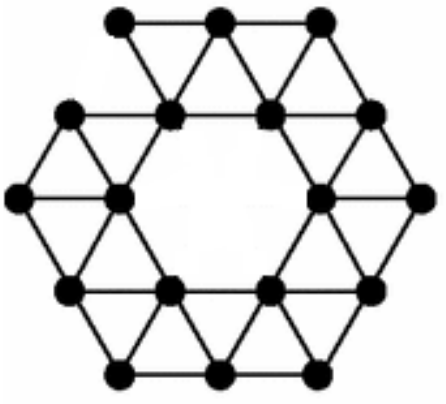}
    \caption{A graph which is not a triangular chain graph.}
    \label{f1}
\end{figure}

\begin{definition}
For $1\leq i\leq s$,
$$\eta_{i}=\eta(S_{i})=
\begin{cases}
1 & \text{if $l_{i}=3$,} \\
0 & \text{otherwise.}
\end{cases}$$
$$\xi_{i}=\xi(S_{i})=
\begin{cases}
1 & \text{if $l_{i}=4$,} \\
0 & \text{otherwise.}
\end{cases}$$
$$\sigma_{i}=\sigma(S_{i})=
\begin{cases}
1 & \text{if $l_{i}=5$,}\\
0 & \text{otherwise.}
\end{cases}$$
\end{definition}

Observe that $T_{n}$ does not contain any nonterminal segment with length three and therefore, if $s\geq3$ then $\eta_{i}=0$ for $2\leq i\leq s-1$.
An edge connecting the vertices of degrees $j$ and $k$ is called edge of the type $(j,k)$. Now, we are in position to establish the general expression for calculating the BID indices of $T_{n}$.
\begin{thm}\label{t1}
Let $T_{n}\in\mathfrak{T}_{n}$ be a triangular chain with length vector $(l_{1},l_{2},...,l_{s})$. Then
\[TI(T_{n})=
\begin{cases}
\Lambda_{0}+\Lambda_{3} & \text{if $s=1,$} \\
\Lambda_{0}+\Lambda_{1}(\eta_{1}+\eta_{2})+\ \Lambda_{2}(\xi_{1}+\xi_{2})+2\Lambda_{3} & \text{if $s=2$,}\\
\Lambda_{0}+\Lambda_{1}(\eta_{1}+\eta_{s})+\Lambda_{2}(\xi_{1}+\xi_{s})
+s\Lambda_{3}+\Lambda_{4}\displaystyle\sum_{i=2}^{s-1}\xi_{i}+\Lambda_{5}\displaystyle\sum_{i=2}^{s-1}\sigma_{i} & \text{if $s\geq3$,}
\end{cases}\]
where
\[\Lambda_{0}=2n\theta_{4,4}+2\theta_{2,3}+2\theta_{2,4}+2\theta_{3,4}-\theta_{3,5}-4\theta_{4,5} \ ,\]
\[\Lambda_{1}=\theta_{2,5}-\theta_{2,4}+\theta_{3,3}-3\theta_{3,4}+\theta_{3,5}+3\theta_{4,4}-2\theta_{4,5} \ ,\]
\[\Lambda_{2}=\theta_{3,5}-\theta_{3,4}+\theta_{4,4}-\theta_{4,5} \ , \
\Lambda_{3}=2\theta_{3,4}+\theta_{3,5}-7\theta_{4,4}+4\theta_{4,5} \ ,\]
\[\Lambda_{4}=2\theta_{3,5}-2\theta_{3,4}+3\theta_{4,4}-4\theta_{4,5}+\theta_{5,5} \ , \
\Lambda_{5}=\theta_{4,4}-2\theta_{4,5}+\theta_{5,5} \ .\]
\end{thm}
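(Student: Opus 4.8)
The plan is to compute $TI(T_n)$ by classifying every edge of $T_n$ according to its type $(j,k)$, which amounts to determining all the quantities $x_{a,b}(T_n)$ appearing in the right-hand side of \eqref{z}. Since every vertex of $T_n$ has degree at most five and $T_n$ is built from triangles, the only possible degrees of vertices are $2,3,4,5$, so the relevant edge types are among $(2,3),(2,4),(2,5),(3,3),(3,4),(3,5),(4,4),(4,5),(5,5)$. The key structural observation is that the edge types are governed entirely by the local configuration of consecutive triangles, and hence by the length vector $(l_1,\dots,l_s)$: inside a long linear stretch almost all vertices have degree $4$ and the edges are of type $(4,4)$, while deviations occur only near the two ends of the chain, near the ``kinks'' joining consecutive segments, and in short segments (lengths $3,4,5$), which is exactly what the indicator functions $\eta_i,\xi_i,\sigma_i$ record.

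First I would fix a convenient drawing/labelling of a linear triangular chain of length $\ell$ and read off its degree sequence and its edge-type counts as functions of $\ell$; this handles a single segment and in particular gives the $s=1$ case, producing $\Lambda_0+\Lambda_3$ after collecting terms (note $\Lambda_0$ carries the bulk term $2n\theta_{4,4}$, since a chain with $n$ triangles has $2n+1$ vertices and $2n+1$ edges, the dominant contribution coming from type $(4,4)$). Next I would analyze how two consecutive segments are glued at a nonterminal triangle: the triangle shared between segments $S_i$ and $S_{i+1}$ forces a vertex of degree $5$ (or a particular local pattern), changing a bounded number of incident edges' types relative to the ``all linear'' baseline. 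I would compute this local correction once and for all, distinguishing whether the adjacent segments have length $3,4,5$ or more — this is where $\Lambda_1,\Lambda_2,\Lambda_4,\Lambda_5$ come from, with the ``$\sum_{i=2}^{s-1}$'' ranging over interior segments and $\eta_1,\eta_s,\xi_1,\xi_s$ handling the two terminal segments separately (recall there are no nonterminal segments of length $3$). Then I would assemble the global count by starting from the hypothetical value one would get by treating $T_n$ as a disjoint-at-the-formula-level union of $s$ linear pieces and adding the $s$ (or $s-1$, or $1$) junction corrections, being careful not to double-count edges shared by two segments; summation over $i$ and regrouping of the $\theta_{a,b}$ coefficients should reproduce the stated $\Lambda_j$'s.

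The main obstacle will be the bookkeeping at the segment junctions and at the terminal triangles: one has to correctly identify which edges change type when a triangle is promoted from ``linear-interior'' to ``branching'', and to handle the overlap of adjacent segments (they share one triangle, hence two vertices and one or two edges) without counting a correction twice. A secondary subtlety is that short terminal segments (length $3$ or $4$) interact with their unique neighbouring junction, so the terminal corrections $\Lambda_1\eta_1$, $\Lambda_2\xi_1$ (and likewise at $S_s$) must be defined so that they already incorporate this interaction, which is why the $s=2$ and $s\ge 3$ formulas differ in how the terminal contributions are weighted. I expect that once the single-segment tally and the single-junction correction are computed explicitly and verified on a small example (e.g. the length vector $(6,5,4,3)$ of Figure \ref{f0}), the general formula follows by a routine induction on $s$ (or a direct summation), with all the $\Lambda_j$ emerging as the appropriate $\mathbb{Z}$-linear combinations of the $\theta_{a,b}$.
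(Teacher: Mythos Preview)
Your approach is sound and would work, but it differs from the paper's in one notable respect. Both you and the paper compute $TI(T_n)$ by determining all the $x_{j,k}(T_n)$, and the paper also partitions the edge set segment-by-segment (using $E(S_1)$ and then $E(S_i)\setminus E(S_{i-1})$ for $i\ge 2$). The genuine difference is in \emph{which} $x_{j,k}$ are counted directly: the paper counts only the easy ones ($x_{2,3},x_{2,4},x_{2,5},x_{3,3},x_{3,5},x_{5,5}$) together with the vertex counts $n_j(T_n)$ by inspection, and then recovers the three remaining quantities $x_{3,4},x_{4,4},x_{4,5}$ by solving the linear system $\sum_{k\ne j}x_{j,k}+2x_{j,j}=j\cdot n_j$ for $j=3,4,5$. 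This degree-sum trick sidesteps precisely the junction bookkeeping you flagged as the main obstacle, so the paper's route is shorter and less error-prone. Your baseline-plus-local-corrections scheme is more hands-on but has the compensating advantage of making transparent \emph{why} each $\Lambda_j$ carries the particular $\theta_{a,b}$ coefficients it does. One small slip to fix: a triangular chain with $n$ triangles has $n+2$ vertices, not $2n+1$ (your edge count $2n+1$ is correct).
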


\begin{proof}
The result can be easily verified for $s=1,2$. Let us assume that $s\geq3$. Since the triangular chain $T_{n}$ does not contain any vertex with degree greater than or equal to 6, from Equation (\ref{z}) it follows that
\begin{equation}\label{y}
TI(T_{n})=\displaystyle\sum_{2\leq j\leq k\leq 5}x_{j,k}(T_{n})\theta_{j,k} \ .
\end{equation}

\begin{figure}
    \centering
    \includegraphics[width=4.6in, height=1.15in]{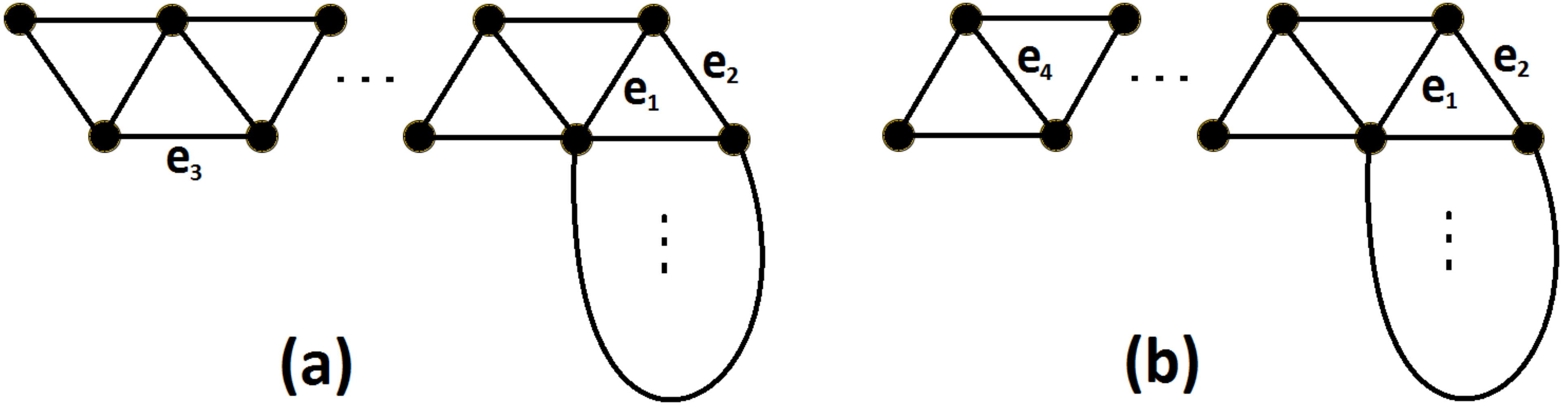}
    \caption{The edges in the first segment, which may be of the type (3,5) are labeled as $e_{1},e_{2},e_{3},e_{4}$. }
    \label{f2}
\end{figure}

\begin{figure}
    \centering
    \includegraphics[width=2.95in, height=1.2in]{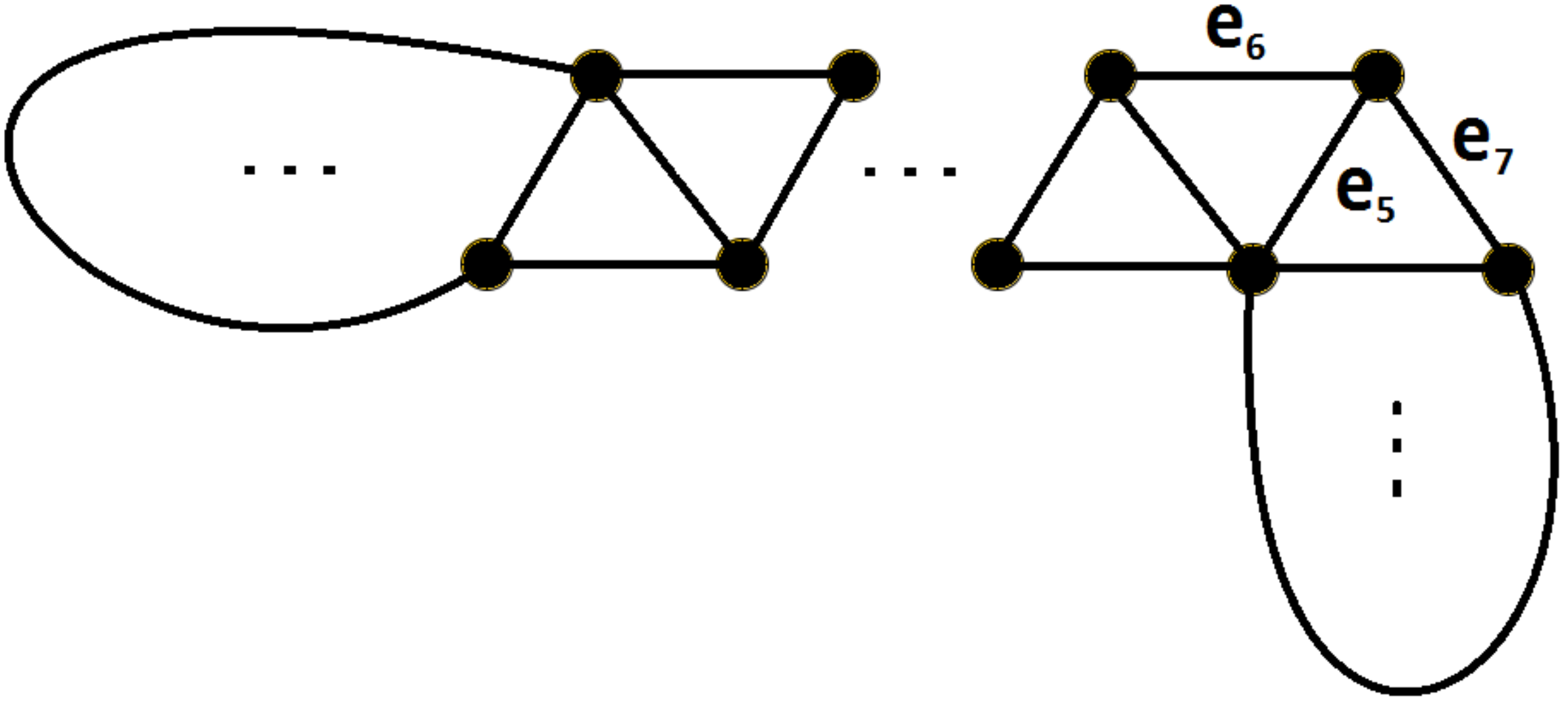}
    \caption{The edges in the $i$th segment (where $2\leq i\leq s-2$ and $s\geq4$), which may be of the type (3,5) are labeled as $e_{5},e_{6},e_{7}$.}
    \label{f3}
\end{figure}
For $2\leq i\leq s$, denote by $x_{j,k}(S_{1})$ and $\widetilde{x}_{j,k}(S_{i})$ the number of edges of type $(j,k)$ belonging to $E(S_{1})$ and $E(S_{i})\setminus E(S_{i-1})$ respectively. Then $x_{j,k}(T_{n})=x_{j,k}(S_{1})+\sum_{i=2}^{s}\widetilde{x}_{j,k}(S_{i})$. To obtain the desired formula, we have to determine $x_{j,k}(T_{n})$ for $2\leq j\leq k\leq5$. Let us start by counting the edges of type $(3,5)$ in $T_{n}$.
Note that the first segment $S_{1}$ contains atleast one edge of the type $(3,5)$, namely $e_{1}$ (see Figure \ref{f2}). Moreover, $e_{2},e_{3}$ or $e_{4}$ (see Figure \ref{f2}) is the edge of type $(3,5)$ if $l_{2}=4,l_{1}=4$ or $l_{1}=3$ respectively. Hence $x_{3,5}(S_{1})=1+\xi_{1}+\xi_{2}+\eta_{1}$. From Figure \ref{f3}, it can be easily seen that the set $E(S_{i})\setminus E(S_{i-1})$ (where $2\leq i\leq s-2$ and $s\geq4$) contains atleast one edge of the type $(3,5)$ namely $e_{5}$ and the edges $e_{6}$, $e_{7}$ are of the type $(3,5)$ if $l_{i}=4$, $l_{i+1}=4$ respectively. This implies that $\widetilde{x}_{3,5}(S_{i})=1+\xi_{i}+\xi_{i+1}$. By analogous reasoning, one have $\widetilde{x}_{3,5}(S_{s-1})=1+\xi_{s-1}+\eta_{s}$ and $\widetilde{x}_{3,5}(S_{s})=\xi_{s}$. Therefore,
if $s=3$, then $x_{3,5}(T_{n})=x_{3,5}(S_{1})+\widetilde{x}_{3,5}(S_{s-1})+\widetilde{x}_{3,5}(S_{s}),$
and if $s\geq4,$ then $x_{3,5}(T_{n})=x_{3,5}(S_{1})+\sum_{i=2}^{s-2}\widetilde{x}_{j,k}(S_{i})
+\widetilde{x}_{3,5}(S_{s-1})+\widetilde{x}_{3,5}(S_{s}).$
In both cases,
\begin{equation}\label{Eq2}
x_{3,5}(T_{n})=s-1+\eta_{1}+\eta_{s}-\xi_{1}-\xi_{s}+2\displaystyle\sum_{i=1}^{s}\xi_{i} \ .
\end{equation}
By simple reasoning and routine calculations, one have
\begin{equation}\label{Eq3}
x_{5,5}(T_{n})=\sum_{i=2}^{s-1}(\xi_{i}+\sigma_{i}),
\end{equation}
\[x_{2,3}(T_{n})=2 \ , \ x_{2,4}(T_{n})=2-\eta_{1}-\eta_{s} \ , \ x_{2,5}(T_{n})=\eta_{1}+\eta_{s} \ , \ n_{2}(T_{n})=2 \ \ \text{and} \]
\begin{equation}\label{Eq4}
\boxed{x_{3,3}(T_{n})=\eta_{1}+\eta_{s} \ , \ n_{3}(T_{n})=s+1 \ , \ n_{4}(T_{n})=n-2s \ , \ n_{5}(T_{n})=s-1.}
\end{equation}
Now, let us consider the following system of equations
\begin{equation}\label{Eq5}
\boxed{\sum_{ \substack{ 2\leq k\leq 5, \\
         k\neq j}}x_{j,k}(T_{n})+2x_{j,j}(T_{n})=j\times n_{j}(T_{n}); \text{ \ \ \ $j=3,4,5$.}}
\end{equation}
Bearing in mind the Equations (\ref{Eq2})-(\ref{Eq4}), we solve the system (\ref{Eq5}) for the unknowns $x_{3,4}(T_{n}),x_{4,4}(T_{n}),x_{4,5}(T_{n})$ and we get
\[x_{3,4}(T_{n})=2s+2-3\eta_{1}-3\eta_{s}+\xi_{1}+\xi_{s}-2\displaystyle\sum_{i=1}^{s}\xi_{i}.\]
\[x_{4,4}(T_{n})=2n-7s+3\eta_{1}+3\eta_{s}+\xi_{1}+\xi_{s}+3\displaystyle\sum_{i=2}^{s-1}\xi_{i}
+\displaystyle\sum_{i=2}^{s-1}\sigma_{i}.\]
\[x_{4,5}(T_{n})=4s-4-2\eta_{1}-2\eta_{s}-\xi_{1}-\xi_{s}-4\displaystyle\sum_{i=2}^{s-1}\xi_{i}
-2\displaystyle\sum_{i=2}^{s-1}\sigma_{i}.\]
By substituting the values of $x_{j,k}(T_{n})$ (where $2\leq j\leq k\leq5$) in Equation (\ref{y}), one arrives at the desired formula.
\end{proof}

\section{\textbf{Extremal Triangular Chains for BID Indices}}

To characterize the extremal triangular chains in $\mathfrak{T}_{n}$ with respect to BID indices, let us defined the structural parameter $\Phi_{TI}$, for any $T_{n}\in\mathfrak{T}_{n}$, as follows:
\[\Phi_{TI}(T_{n})=
\begin{cases}
\Lambda_{3} & \text{if $s=1$} \\
\Lambda_{1}(\eta_{1}+\eta_{2})+\Lambda_{2}(\xi_{1}+\xi_{2})+2\Lambda_{3} & \text{if $s=2$,}
\end{cases}\]
and for $s\geq3$,
\[\Phi_{TI}(T_{n})=\sum_{i=1}^{s}\Phi_{TI}(S_{i})=\Lambda_{1}(\eta_{1}+\eta_{s})+\Lambda_{2}(\xi_{1}+\xi_{s})
+s\Lambda_{3}+\Lambda_{4}\sum_{i=2}^{s-1}\xi_{i}+\Lambda_{5}\sum_{i=2}^{s-1}\sigma_{i} \ ,\]
where
\[\Phi_{TI}(S_{1})=\Lambda_{1}\eta_{1}+\Lambda_{2}\xi_{1}+\Lambda_{3} \ ,
\Phi_{TI}(S_{s})=\Lambda_{1}\eta_{s}+\Lambda_{2}\xi_{s}+\Lambda_{3} \ ,\]
\[\Phi_{TI}(S_{i})=\Lambda_{3}+\Lambda_{4}\xi_{i}+\Lambda_{5}\sigma_{i} \ ; \ \ \ 2\leq i\leq s-1.\]
Bearing in mind the definition of $\Phi_{TI}$ and the Theorem \ref{t1}, one have
\begin{cor}\label{cr0}
Among all the triangular chains in the collection $\mathfrak{T}_{n}$ , a triangular chain has the maximum (respectively minimum) $TI$ value if and only if it has the maximum (respectively minimum) $\Phi_{TI}$ value.
\end{cor}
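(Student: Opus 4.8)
The plan is to observe that Corollary \ref{cr0} is an immediate consequence of Theorem \ref{t1} once one notes that the quantity $\Lambda_{0}$ is constant over the whole collection $\mathfrak{T}_{n}$. Concretely, I would first recall the definition of $\Phi_{TI}$ and compare it, case by case, with the formula for $TI(T_{n})$ given in Theorem \ref{t1}. In the case $s=1$ we have $TI(T_{n})=\Lambda_{0}+\Lambda_{3}$ and $\Phi_{TI}(T_{n})=\Lambda_{3}$; in the case $s=2$ we have $TI(T_{n})=\Lambda_{0}+\Lambda_{1}(\eta_{1}+\eta_{2})+\Lambda_{2}(\xi_{1}+\xi_{2})+2\Lambda_{3}$ while $\Phi_{TI}(T_{n})$ is exactly the same expression with the leading $\Lambda_{0}$ deleted; and for $s\geq 3$ the expression for $\Phi_{TI}(T_{n})$ is, again by definition, precisely $TI(T_{n})-\Lambda_{0}$. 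Hence in every case one has the single identity
\[
TI(T_{n})=\Lambda_{0}+\Phi_{TI}(T_{n}).
\]

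The second step is to record that $\Lambda_{0}=2n\theta_{4,4}+2\theta_{2,3}+2\theta_{2,4}+2\theta_{3,4}-\theta_{3,5}-4\theta_{4,5}$ depends only on $n$ and on the fixed real numbers $\theta_{a,b}$ attached to the index $TI$; it does not involve any of the structural parameters $\eta_{i},\xi_{i},\sigma_{i}$ or the number of segments $s$. Consequently, as $T_{n}$ ranges over $\mathfrak{T}_{n}$ (where $n$ is fixed), the term $\Lambda_{0}$ is a constant. Therefore $TI$ and $\Phi_{TI}$, viewed as functions on $\mathfrak{T}_{n}$, differ by an additive constant, so $TI(T_{n})\leq TI(T_{n}')$ if and only if $\Phi_{TI}(T_{n})\leq \Phi_{TI}(T_{n}')$ for all $T_{n},T_{n}'\in\mathfrak{T}_{n}$. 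This yields both the maximum and the minimum statements simultaneously.

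I do not anticipate a genuine obstacle here: the only thing that requires care is the bookkeeping in the first step, namely making sure that the three-way case split defining $\Phi_{TI}$ (and, for $s\geq 3$, the decomposition $\Phi_{TI}(T_{n})=\sum_{i=1}^{s}\Phi_{TI}(S_{i})$ into terminal and nonterminal segment contributions) matches the three-way case split in Theorem \ref{t1} term for term. Once that verification is done, the corollary follows from the trivial fact that shifting a function by a constant does not change its set of maximizers or minimizers, and no further argument is needed.
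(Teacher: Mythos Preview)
Your proposal is correct and matches the paper's own treatment: the paper does not give a separate proof of this corollary but simply remarks that it follows by ``bearing in mind the definition of $\Phi_{TI}$ and Theorem \ref{t1},'' which is exactly the identity $TI(T_{n})=\Lambda_{0}+\Phi_{TI}(T_{n})$ (with $\Lambda_{0}$ depending only on $n$) that you spell out. Your write-up just makes explicit the constant-shift observation the paper leaves implicit.
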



Denote by $L_{n}$ the linear triangular chain with $n\geq4$ triangles. By a zigzag triangular chain $Z_{n}$, we mean a triangular chain with $n\geq4$ triangles and length vector $(a,\underbrace{4,4,4,...,4}_{\left(\lfloor\frac{n}{2}\rfloor-2\right)-\text{times}},b)$ where $a,b\leq4$ and at least one of $a,b$ is 3.

\begin{cor}\label{cr1}
Suppose that $\Lambda_{1},\Lambda_{2},...,\Lambda_{5}$ are the quantities defined in Theorem \ref{t1} and let $T_{n}\in\mathfrak{T}_{n}$. \\
\textit{1.} If $\Lambda_{i}<0$ for $i=1,2,3,4$ and $-\Lambda_{3}>\Lambda_{5}>0$, then $TI(T_{n})\leq TI(L_{n})$ with equality if and only if $T_{n}\cong L_{n}$.\\
\textit{2.} If $\Lambda_{i}>0$ for $i=1,2,3,4$ and $-\Lambda_{3}<\Lambda_{5}<0$, then $TI(T_{n})\geq TI(L_{n})$ with equality if and only if $T_{n}\cong L_{n}$.\\
\end{cor}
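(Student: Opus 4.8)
The plan is to pass to the structural parameter $\Phi_{TI}$ by means of Corollary~\ref{cr0}: it suffices to prove that $\Phi_{TI}(T_n)\le\Phi_{TI}(L_n)$ for every $T_n\in\mathfrak{T}_n$ under the hypotheses of part~1, with equality only when $T_n\cong L_n$, and the reverse inequality under the hypotheses of part~2. A triangular chain in $\mathfrak{T}_n$ consists of a single segment precisely when it is (isomorphic to) $L_n$, and in that case $\Phi_{TI}(L_n)=\Lambda_3$. Hence the whole argument is a case analysis according to whether the number of segments $s$ of $T_n$ equals $1$, equals $2$, or is at least $3$.

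For part~1, assume $\Lambda_1,\Lambda_2,\Lambda_3,\Lambda_4<0$ and $-\Lambda_3>\Lambda_5>0$. The case $s=1$ is trivial. If $s=2$, then $\Phi_{TI}(T_n)-\Lambda_3=\Lambda_1(\eta_1+\eta_2)+\Lambda_2(\xi_1+\xi_2)+\Lambda_3<0$, because $\eta_i,\xi_i\ge0$ and $\Lambda_1,\Lambda_2,\Lambda_3<0$. If $s\ge3$, then
\[
\Phi_{TI}(T_n)-\Lambda_3=\Lambda_1(\eta_1+\eta_s)+\Lambda_2(\xi_1+\xi_s)+(s-1)\Lambda_3+\Lambda_4\sum_{i=2}^{s-1}\xi_i+\Lambda_5\sum_{i=2}^{s-1}\sigma_i ,
\]
where every summand other than the last is non-positive, while $0\le\sum_{i=2}^{s-1}\sigma_i\le s-2$; thus $\Phi_{TI}(T_n)-\Lambda_3\le(s-1)\Lambda_3+(s-2)\Lambda_5$, and since $0<\Lambda_5<-\Lambda_3$ and $s-2\ge1$ we get $(s-1)\Lambda_3+(s-2)\Lambda_5<(s-1)\Lambda_3-(s-2)\Lambda_3=\Lambda_3<0$. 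So in all cases $\Phi_{TI}(T_n)\le\Phi_{TI}(L_n)$ with equality exactly when $s=1$, i.e. when $T_n\cong L_n$; Corollary~\ref{cr0} then gives the claim about $TI$. Part~2 follows by the same computation with all inequalities reversed (now $\Lambda_1,\ldots,\Lambda_4>0$, $\Lambda_5<0$ and $\Lambda_5>-\Lambda_3$, so the $\Lambda_5$-term is the only non-positive one and $\Lambda_5\sum_{i=2}^{s-1}\sigma_i\ge(s-2)\Lambda_5>-(s-2)\Lambda_3$); alternatively, note that replacing every $\theta_{a,b}$ by $-\theta_{a,b}$ sends $TI$ to $-TI$ and each $\Lambda_j$ to $-\Lambda_j$, which reduces part~2 to part~1.

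The only step that needs care---and the only place a hypothesis does real work, rather than merely pinning down signs---is the estimate for $s\ge3$: the term $\Lambda_5\sum_{i=2}^{s-1}\sigma_i$ carries the ``wrong'' sign, so one must check that the $s$-linear contribution $s\Lambda_3$ beats it uniformly in $s$. Balancing the crude bound $\sum_{i=2}^{s-1}\sigma_i\le s-2$ against the $s-1$ residual copies of $\Lambda_3$ is exactly what forces the quantitative hypothesis $-\Lambda_3>\Lambda_5$ (respectively $-\Lambda_3<\Lambda_5$ in part~2); everything else is bookkeeping with the signs of $\Lambda_1,\Lambda_2,\Lambda_4$ against the nonnegative counts $\eta_i,\xi_i$.
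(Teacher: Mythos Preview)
Your argument is correct and essentially the same as the paper's: both reduce via Corollary~\ref{cr0} to showing $\Phi_{TI}(T_n)<\Lambda_3$ whenever $s\ge 2$, discarding the $\Lambda_1,\Lambda_2,\Lambda_4$ contributions by sign and using $\Lambda_3+\Lambda_5<0$ to absorb the lone positive term. The only cosmetic difference is that the paper bounds each nonterminal segment contribution by $\Phi_{TI}(S_i)\le\Lambda_3+\Lambda_5<0$ and sums, whereas you bound the aggregate directly via $\sum_{i=2}^{s-1}\sigma_i\le s-2$.
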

\begin{proof}
\textit{1.}
From the definition of $\Phi_{TI}$ , it follows that $\Phi_{TI}(L_{n})=\Lambda_{3}$. For $s=2$, one have
\[\Phi_{TI}(T_{n})=\Phi_{TI}(S_{1})+\Phi_{TI}(S_{2})=\Lambda_{1}(\eta_{1}+\eta_{2})+\Lambda_{2}(\xi_{1}+\xi_{2})
+2\Lambda_{3}\leq2\Lambda_{3}<\Lambda_{3}.\]
If $s\geq3$ then for $2\leq i\leq s-1$, one have
\[\Phi_{TI}(S_{i})=\Lambda_{3}+\Lambda_{4}\xi_{i}+\Lambda_{5}\sigma_{i}\leq\Lambda_{3}+\Lambda_{5}<0,\]
and hence $\Phi_{TI}(T_{n})=\sum_{i=1}^{s}\Phi_{TI}(S_{i})<\Lambda_{3}$. Therefore $\Phi_{TI}(T_{n})\leq\Lambda_{3}$ with equality if and only if $T_{n}\cong L_{n}$. From Corollary \ref{cr0}, desired result follows.\\

\textit{2.} The proof of second part is fully analogous to that of first part.
\end{proof}

\begin{cor}\label{cr2}
Suppose that $\Lambda_{1},\Lambda_{2},...,\Lambda_{5}$ are the quantities defined in Theorem \ref{t1} and let $T_{n}\in\mathfrak{T}_{n}$. \\
\textit{1.} If $-\Lambda_{3}>\Lambda_{5}$, $\Lambda_{i}$ is negative for $i=1,2,3,4$, $2\Lambda_{4}<\Lambda_{1}<\Lambda_{2}$ and $\Lambda_{1}+\Lambda_{5}>\Lambda_{2}+\Lambda_{4}$, then $TI(T_{n})\geq TI(Z_{n})$ with equality if and only if $T_{n}\cong Z_{n}$.\\
\textit{2.} If $-\Lambda_{3}<\Lambda_{5}$, $\Lambda_{i}$ is positive for $i=1,2,3,4$, $2\Lambda_{4}>\Lambda_{1}>\Lambda_{2}$ and $\Lambda_{1}+\Lambda_{5}<\Lambda_{2}+\Lambda_{4}$, then $TI(T_{n})\leq TI(Z_{n})$ with equality if and only if $T_{n}\cong Z_{n}$.
\end{cor}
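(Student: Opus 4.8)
The plan is to use Corollary \ref{cr0} to reduce everything to a statement about the structural parameter $\Phi_{TI}$: under the hypotheses of part 1, $Z_{n}$ is the unique member of $\mathfrak{T}_{n}$ at which $\Phi_{TI}$ attains its minimum, and under the hypotheses of part 2 it is the unique member at which $\Phi_{TI}$ attains its maximum. Since $\Phi_{TI}(T_{n})$ is a fixed $\mathbb{Z}$-linear combination of $\Lambda_{1},\dots,\Lambda_{5}$, replacing each $\Lambda_{j}$ by $-\Lambda_{j}$ sends $\Phi_{TI}$ to $-\Phi_{TI}$ and carries the hypotheses of part 1 exactly onto those of part 2; hence part 2 follows formally from part 1, and I would only prove part 1.

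Next I would record the per-segment behaviour. One has $\Phi_{TI}(T_{n})=\sum_{i=1}^{s}\Phi_{TI}(S_{i})$ (for $s=1$ the single segment contributes $\Lambda_{3}$, dealt with by the splitting move below); a terminal segment contributes $\Lambda_{1}+\Lambda_{3},\ \Lambda_{2}+\Lambda_{3},\ \Lambda_{3}$ according as its length is $3,\ 4,\ \ge 5$, and an interior segment contributes $\Lambda_{3}+\Lambda_{4},\ \Lambda_{3}+\Lambda_{5},\ \Lambda_{3}$ according as its length is $4,\ 5,\ \ge 6$. Under the hypotheses of part 1 we have $\Lambda_{1}<\Lambda_{2}<0$, so a terminal contribution is smallest precisely at length $3$; and $\Lambda_{4}<0$, $\Lambda_{3}+\Lambda_{5}<0$ (from $-\Lambda_{3}>\Lambda_{5}$) and $\Lambda_{4}<\Lambda_{5}$ (from $\Lambda_{1}+\Lambda_{5}>\Lambda_{2}+\Lambda_{4}$ together with $\Lambda_{1}<\Lambda_{2}$), so an interior contribution is smallest precisely at length $4$. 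I would also use from Theorem \ref{t1} that $n_{4}(T_{n})=n-2s\ge 0$, whence $s\le\lfloor n/2\rfloor$ with $Z_{n}$ attaining $s=\lfloor n/2\rfloor$, together with the admissibility constraints $l_{1},l_{s}\ge 3$ and $l_{i}\ge 4$ for $2\le i\le s-1$.

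The engine of the proof is a reduction: if $T_{n}$ has a terminal segment of length $\ge 5$ or an interior segment of length $\ge 6$, then inserting one extra kink inside that segment splits it into two admissible segments — a terminal of length $3$ together with an interior of length $l_{i}-2$ in the first case, and interior segments of lengths $4$ and $l_{i}-2$ in the second — producing a chain still in $\mathfrak{T}_{n}$ with the same $n$ but one more segment, and strictly decreasing $\Phi_{TI}$; indeed each such step reduces to one of $\Lambda_{1}+\Lambda_{3}+\Lambda_{4}<0$, $\Lambda_{1}+\Lambda_{3}+\Lambda_{5}<0$, $\Lambda_{3}+2\Lambda_{4}<0$, $\Lambda_{3}+\Lambda_{4}+\Lambda_{5}<0$, all of which follow from $\Lambda_{1},\Lambda_{3},\Lambda_{4}<0$ and $-\Lambda_{3}>\Lambda_{5}$. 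Hence every chain can be transformed, without increasing $\Phi_{TI}$ and strictly decreasing it unless the chain is already so, into a ``reduced'' chain all of whose terminal segments have length $3$ or $4$ and all of whose interior segments have length $4$ or $5$. For such a reduced chain with $s\ge 3$, letting $t\in\{0,1,2\}$ count the terminal segments of length $4$ and $u\ge 0$ the interior segments of length $5$, one has $t+u=n-2s$ and
\[\Phi_{TI}(T_{n})=2(\Lambda_{1}+\Lambda_{3})+(s-2)(\Lambda_{3}+\Lambda_{4})+t(\Lambda_{2}-\Lambda_{1})+u(\Lambda_{5}-\Lambda_{4}),\]
while the reduced chains with $s\le 2$ exist only for $n\in\{4,5,6\}$ and can be handled by hand ($Z_{4}$ and $Z_{5}$ being among them).

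It then remains to minimise the displayed expression over admissible $(s,t,u)$. Since $\Lambda_{3}+\Lambda_{4}<0$ and $0<\Lambda_{2}-\Lambda_{1}<\Lambda_{5}-\Lambda_{4}$, lowering $s$ by one (equivalently raising the excess $n-2s$ by two) changes the minimum value of $\Phi_{TI}$ over $(t,u)$ by at least $-(\Lambda_{3}+\Lambda_{4})+2(\Lambda_{2}-\Lambda_{1})>0$, and a unit of excess is cheapest when spent on a terminal segment; hence the minimum occurs at $s=\lfloor n/2\rfloor$, forcing $n-2s=0$ with $t=u=0$ when $n$ is even and $n-2s=1$ with $t=1,\ u=0$ when $n$ is odd, i.e. exactly at $Z_{n}$. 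For $n$ even this is already immediate from the crude bound $\Phi_{TI}(T_{n})\ge 2(\Lambda_{1}+\Lambda_{3})+(s-2)(\Lambda_{3}+\Lambda_{4})\ge 2(\Lambda_{1}+\Lambda_{3})+(n/2-2)(\Lambda_{3}+\Lambda_{4})=\Phi_{TI}(Z_{n})$, equality forcing $s=n/2$ and hence every segment minimal. The odd case is the main obstacle: there the crude bound undershoots $\Phi_{TI}(Z_{n})$ by precisely $\Lambda_{2}-\Lambda_{1}$, so one must argue separately that (i) among reduced chains with $s=(n-1)/2$ the single unavoidable excess unit must lie on a terminal segment, since placing it on an interior one costs $\Lambda_{5}-\Lambda_{4}>\Lambda_{2}-\Lambda_{1}$, and (ii) every reduced chain with $s\le(n-3)/2$ carries at least one excess unit of cost $\ge\Lambda_{2}-\Lambda_{1}$, so that $\Phi_{TI}(T_{n})\ge 2(\Lambda_{1}+\Lambda_{3})+((n-3)/2-2)(\Lambda_{3}+\Lambda_{4})+(\Lambda_{2}-\Lambda_{1})=\Phi_{TI}(Z_{n})-(\Lambda_{3}+\Lambda_{4})>\Phi_{TI}(Z_{n})$. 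Tracking the strict inequalities through the reduction moves and this final optimisation yields the ``if and only if''; this bookkeeping, together with making each of the finitely many sign comparisons come out the right way — which is exactly what the hypotheses $2\Lambda_{4}<\Lambda_{1}<\Lambda_{2}$, $\Lambda_{1}+\Lambda_{5}>\Lambda_{2}+\Lambda_{4}$, $-\Lambda_{3}>\Lambda_{5}$ and $\Lambda_{i}<0$ guarantee — is the part that needs the most care.
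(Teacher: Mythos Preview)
Your argument is correct in substance and genuinely different from the paper's. The paper takes a putative minimiser $\widetilde T_n$ and eliminates the possibilities $l_1\ge 5$, $l_1=l_s=4$, $l_j=5$, $l_j\ge 6$ one by one via four tailor-made contradiction moves (the last of which, for $l_j\ge 6$, is the rather elaborate rearrangement $(l_j-3,\,l_1+1,\,l_2,\dots,4,\dots,l_s)$). You instead run a uniform two-stage programme: first push every chain down to a ``reduced'' chain (terminals of length $\le 4$, interiors of length $\le 5$) by a single kind of splitting move, then write $\Phi_{TI}$ of any reduced chain explicitly as $2(\Lambda_1+\Lambda_3)+(s-2)(\Lambda_3+\Lambda_4)+t(\Lambda_2-\Lambda_1)+u(\Lambda_5-\Lambda_4)$ with $t+u=n-2s$ and minimise this linear form. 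This is cleaner and more modular than the paper's case analysis; in particular your interior split $(\dots,l_j,\dots)\to(\dots,4,l_j-2,\dots)$ replaces the paper's complicated fourth move. A pleasant by-product is that, as you have written it, none of your sign checks actually invoke the hypothesis $2\Lambda_4<\Lambda_1$: all the reduction inequalities follow from $\Lambda_1,\Lambda_3,\Lambda_4<0$ and $-\Lambda_3>\Lambda_5$, and the monotonicity $M(s)>M(s+1)$ needs only $\Lambda_3+\Lambda_4<0$, $\Lambda_1<\Lambda_2$, and $\Lambda_1+\Lambda_5>\Lambda_2+\Lambda_4$. The paper does use $2\Lambda_4<\Lambda_1$ (in the $x=y=0$ subcase of its fourth move), so your route may in fact give a marginally stronger statement.

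One small slip to fix: when you split a terminal segment of length $l_1\ge 5$ by inserting a kink, the two resulting lengths satisfy $a+b=l_1+2$ (consecutive segments always share two triangles), so the new pair is $(3,\,l_1-1)$, not $(3,\,l_1-2)$. Your listed inequalities $\Lambda_1+\Lambda_3+\Lambda_4<0$ and $\Lambda_1+\Lambda_3+\Lambda_5<0$ are exactly those for the correct move $(3,\,l_1-1)$, so this is only a typo, not a gap; you should also record the easy case $l_1\ge 7$, where the change is $\Lambda_1+\Lambda_3<0$. Finally, it is worth stating explicitly why each split stays in $\mathfrak T_n$: the new interior lengths are always $\ge 4$, so no non-terminal segment of length $3$ (hence no vertex of degree $\ge 6$) is created.
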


\begin{proof}
\textit{1.} The result can be easily justified for $n\leq6$, so let us assume that $n\geq7$.  Let $\widetilde{T}_{n}\in\mathfrak{T}_{n}$ such that $\Phi_{TI}(\widetilde{T}_{n})$ is minimum. Note that $\Phi_{TI}(L_{n})>\Phi_{TI}(Z_{n})$ and hence $\widetilde{T}_{n}\ncong L_{n}$. Suppose that $\widetilde{T}_{n}$ has length vector $(l_{1},l_{2},...,l_{s})$. If at least one of $l_{1},l_{s}$ is greater than 4, say $l_{1}\geq5$. Then the triangular chain $T^{(0)}_{n}$ with length vector $(3,l_{1}-1,l_{2},l_{3},...,l_{s})$ belongs to $\mathfrak{T}_{n}$ and
\[\Phi_{TI}(T^{(0)}_{n})-\Phi_{TI}(\widetilde{T}_{n})=
\begin{cases}
\Lambda_{1}+\Lambda_{3}+\Lambda_{4} & \text{if $l_{1}=5$,} \\
\Lambda_{1}+\Lambda_{3}+\Lambda_{5} & \text{if $l_{1}=6$,} \\
\Lambda_{1}+\Lambda_{3} & \text{if $l_{1}\geq7$.}
\end{cases}\]
It is easy to see that $\Phi_{TI}(T^{(0)}_{n})-\Phi_{TI}(\widetilde{T}_{n})<0$. This contradicts the minimality of $\Phi_{TI}(\widetilde{T}_{n})$. Therefore $l_{1},l_{s}\leq4$, which implies that $s\geq3$ (since $n\geq7$). Now, we will prove that at least one of $l_{1},l_{s}$ is 3. Suppose to the contrary that $l_{1}=l_{s}=4$. Note that the triangular chain $T^{(00)}_{n}$ with length vector $(3,l_{1},l_{2},...,l_{s-1},l_{s}-1)$ is a member of $\mathfrak{T}_{n}$ and
\[\Phi_{TI}(T^{(00)}_{n})-\Phi_{TI}(\widetilde{T}_{n})=
2\Lambda_{1}-2\Lambda_{2}+\Lambda_{3}+\Lambda_{4}<0,\]
which is a contradiction. Hence, at least one of $l_{1},l_{s}$ is 3. Without loss of generality we can assume that $l_{1}=3$. Now, if $l_{j}\geq5$ for some $j$ where $2\leq j\leq s-1$. We consider two cases.

\textit{Case 1.} If $l_{j}=5$, then the triangular chain $T^{(000)}_{n}$ with length vector $(l_{1}+1,l_{2},l_{3},...,l_{j-1},l_{j}-1,l_{j+1},...,l_{s})$ belongs to $\mathfrak{T}_{n}$ and
\[\Phi_{TI}(T^{(000)}_{n})-\Phi_{TI}(\widetilde{T}_{n})=
-\Lambda_{1}+\Lambda_{2}+\Lambda_{4}-\Lambda_{5}<0,\]
a contradiction.

\textit{Case 2.} If $l_{j}\geq6$, then the triangular chain $T^{(0000)}_{n}$ with length vector $(l_{j}-3,l_{1}+1,l_{2},l_{3},...,l_{j-1},4,l_{j+1},...,l_{s})$ is an element of $\mathfrak{T}_{n}$ and
\[\Phi_{TI}(T^{(0000)}_{n})-\Phi_{TI}(\widetilde{T}_{n})=
(x-1)\Lambda_{1}+y\Lambda_{2}+\Lambda_{3}+2\Lambda_{4},\]
where $x,y\in\{0,1\}$ such that at least one of $x,y$ is zero. Note that $\Phi_{TI}(T^{(0000)}_{n})-\Phi_{TI}(\widetilde{T}_{n})$ is negative and hence a contradiction is obtained.\\
Hence we conclude that $\widetilde{T}_{n}\cong Z_{n}$. By using the Corollary \ref{cr0}, we arrive at the desired result.\\

\textit{2.} The proof of second part is completely analogous to that of part 1.
\end{proof}

The choices $\theta_{a,b}=\frac{1}{\sqrt{ab}},\frac{2\sqrt{ab}}{a+b},\frac{1}{\sqrt{a+b}},\frac{1}{ab},ln(a+b),\frac{2}{a+b}$ (where $ln$ denotes the natural logarithm) in Equation (1.1) correspond to the Randi\'{c} index, first geometric-arithmetic index, sum-connectivity index, modified second Zagreb index, natural logarithm of the multiplicative sum Zagreb index, harmonic index respectively.

\begin{cor}
Let $T_{n}$ be any triangular chain in the collection $\mathfrak{T}_{n}$. \\
\textit{1.} If $TI$ is one of the following topological indices: sum-connectivity index, Randi\'{c} index, harmonic index, first geometric-arithmetic index and modified second Zagreb index. Then
\[TI(Z_{n})\leq TI(T_{n})\leq TI(L_{n}),\]
with left (respectively right) equality if and only if $T_{n}\cong Z_{n}$ (respectively $T_{n}\cong L_{n}$);\\
\textit{2.} For the multiplicative sum Zagreb index, the following inequality holds:
\[\Pi_{1}^{*}(L_{n}) \leq \Pi_{1}^{*}(T_{n})\leq \Pi_{1}^{*}(Z_{n}),\]
with left (respectively right) equality if and only if $T_{n}\cong L_{n}$ (respectively $T_{n}\cong Z_{n}$).
\end{cor}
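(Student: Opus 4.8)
The plan is to deduce the whole statement from Corollaries \ref{cr1} and \ref{cr2} by verifying, for each of the indices involved, the sign hypotheses those corollaries place on the quantities $\Lambda_1,\dots,\Lambda_5$ of Theorem \ref{t1}. For $n\le 6$ the collection $\mathfrak{T}_n$ is finite, so the inequalities reduce to a direct check (exactly as in the proof of Corollary \ref{cr2}); thus I may assume $n$ is large.

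For part 1, for each of the five indices I would substitute the corresponding weight into the explicit formulas
\[\Lambda_{1}=\theta_{2,5}-\theta_{2,4}+\theta_{3,3}-3\theta_{3,4}+\theta_{3,5}+3\theta_{4,4}-2\theta_{4,5},\quad
\Lambda_{2}=\theta_{3,5}-\theta_{3,4}+\theta_{4,4}-\theta_{4,5},\]
\[\Lambda_{3}=2\theta_{3,4}+\theta_{3,5}-7\theta_{4,4}+4\theta_{4,5},\quad
\Lambda_{4}=2\theta_{3,5}-2\theta_{3,4}+3\theta_{4,4}-4\theta_{4,5}+\theta_{5,5},\quad
\Lambda_{5}=\theta_{4,4}-2\theta_{4,5}+\theta_{5,5},\]
and evaluate them. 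For the modified second Zagreb index ($\theta_{a,b}=1/(ab)$) and the harmonic index ($\theta_{a,b}=2/(a+b)$) the $\Lambda_j$ are exact rational numbers; for the Randi\'c index ($\theta_{a,b}=1/\sqrt{ab}$), the sum-connectivity index ($\theta_{a,b}=1/\sqrt{a+b}$) and the first geometric--arithmetic index ($\theta_{a,b}=2\sqrt{ab}/(a+b)$) the $\Lambda_j$ are algebraic numbers in $\sqrt2,\sqrt3,\sqrt5,\sqrt6,\dots$, which I would bound to a few decimal places so as to certify the needed strict inequalities. The aim is to confirm, in every case, that $\Lambda_i<0$ for $i=1,2,3,4$, that $-\Lambda_3>\Lambda_5>0$, that $2\Lambda_4<\Lambda_1<\Lambda_2$, and that $\Lambda_1+\Lambda_5>\Lambda_2+\Lambda_4$. (Note $\Lambda_5>0$ is automatic when $\theta_{a,b}=g(a+b)$ with $g$ convex, since then $\Lambda_5=g(8)-2g(9)+g(10)$; this covers sum-connectivity and harmonic, and the remaining cases are checked directly.) Once these hold, Corollary \ref{cr1}(1) gives $TI(T_n)\le TI(L_n)$ with equality iff $T_n\cong L_n$, and Corollary \ref{cr2}(1) gives $TI(T_n)\ge TI(Z_n)$ with equality iff $T_n\cong Z_n$; together these yield the chain $TI(Z_n)\le TI(T_n)\le TI(L_n)$.

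For part 2 I would first pass to the logarithm: since $\Pi_1^{*}(G)=\prod_{uv\in E(G)}(d_u+d_v)$, we have $\ln\Pi_1^{*}(G)=\sum_{uv\in E(G)}\ln(d_u+d_v)$, which is precisely the BID index arising from Equation (\ref{z}) with $\theta_{a,b}=\ln(a+b)$ (the ``natural logarithm of multiplicative sum Zagreb index''). Computing $\Lambda_1,\dots,\Lambda_5$ for this weight, I would check that $\Lambda_i>0$ for $i=1,2,3,4$, that $-\Lambda_3<\Lambda_5<0$, that $2\Lambda_4>\Lambda_1>\Lambda_2$, and that $\Lambda_1+\Lambda_5<\Lambda_2+\Lambda_4$; these are exactly the hypotheses of Corollary \ref{cr1}(2) and Corollary \ref{cr2}(2), giving $\ln\Pi_1^{*}(L_n)\le\ln\Pi_1^{*}(T_n)\le\ln\Pi_1^{*}(Z_n)$ with the stated equality cases. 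Applying the strictly increasing map $\exp$ to this chain produces $\Pi_1^{*}(L_n)\le\Pi_1^{*}(T_n)\le\Pi_1^{*}(Z_n)$, as claimed.

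The only real obstacle is the bookkeeping in the sign verification: each $\Lambda_j$ is a signed combination of up to seven $\theta$-values, and for the three indices with irrational weights the inequalities $2\Lambda_4<\Lambda_1$ and $\Lambda_1+\Lambda_5>\Lambda_2+\Lambda_4$ are not visibly of fixed sign, so I would pin down explicit numerical bounds (to roughly three or four decimal places) on the relevant $\theta_{a,b}$ before asserting them. No new structural idea beyond Corollaries \ref{cr1} and \ref{cr2} is required; everything else is careful arithmetic.
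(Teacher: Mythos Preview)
Your proposal is correct and follows essentially the same route as the paper: verify numerically that each of the five indices in part~1 satisfies the hypotheses of Corollary~\ref{cr1}(1) and Corollary~\ref{cr2}(1), and that $\ln\Pi_1^{*}$ satisfies those of Corollary~\ref{cr1}(2) and Corollary~\ref{cr2}(2), then exponentiate. The paper simply asserts ``by routine computations'' where you spell out the bookkeeping; your separate treatment of small $n$ is unnecessary, since Corollaries~\ref{cr1} and~\ref{cr2} already cover all $n\ge4$.
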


\begin{proof}
\textit{1.} By routine computations one can easily verified that the hypothesis of Corollary \ref{cr1}(1) and Corollary \ref{cr2}(1) are satisfied for each of the following indices: sum-connectivity index, Randi\'{c} index, harmonic index, first geometric-arithmetic index and modified second Zagreb index. Hence the desired result follows from Corollary \ref{cr1}(1) and Corollary \ref{cr2}(1).\\

\textit{2.} It can be easily checked that the hypothesis of Corollary \ref{cr1}(2) and Corollary \ref{cr2}(2) are satisfied for $ln[\Pi_{1}^{*}]$ and hence one have
\begin{equation}\label{Eq11111}
ln[\Pi_{1}^{*}(L_{n})] \leq ln[\Pi_{1}^{*}(T_{n})]\leq ln[\Pi_{1}^{*}(Z_{n})],
\end{equation}
with left (respectively right) equality if and only if $T_{n}\cong L_{n}$ (respectively $T_{n}\cong Z_{n}$). Since the exponential function is strictly increasing and this function is inverse of the natural logarithm function. Therefore, from the Inequality \ref{Eq11111} the required result follows.
\end{proof}

The Equation (\ref{z}) gives the augmented Zagreb index ($AZI$) if we take $\theta_{a,b}=\left(\frac{ab}{a+b-2}\right)^{3}$. For $n\geq6$, denote by $T^{-}_{n}$ the triangular chain with the length vector $(3,x,3)$ where $x\geq4$. Note that $T^{-}_{6}\cong Z_{6}$.

\begin{cor}
Let $T_{n}$ be any triangular chain in the collection $\mathfrak{T}_{n}$. Then
\[AZI(T_{n})\geq
\begin{cases}
AZI(Z_{n})  & \text{if $n\leq8$,} \\
AZI(T^{-}_{n}) & \text{otherwise.}
\end{cases}\]
The equality in the first case holds if and only if $T_{n}\cong Z_{n}$ and the equality in the second case holds if and only if $T_{n}\cong T^{-}_{n}$.

\end{cor}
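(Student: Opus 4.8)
By Corollary~\ref{cr0} it suffices to find the chains in $\mathfrak{T}_{n}$ minimising $\Phi_{TI}$ for the choice $\theta_{a,b}=\bigl(\tfrac{ab}{a+b-2}\bigr)^{3}$. A routine computation of the values $\theta_{2,3}=\theta_{2,4}=\theta_{2,5}=8$, $\theta_{3,3}=\tfrac{729}{64}$, $\theta_{3,4}=\tfrac{1728}{125}$, $\theta_{3,5}=\tfrac{125}{8}$, $\theta_{4,4}=\tfrac{512}{27}$, $\theta_{4,5}=\tfrac{8000}{343}$, $\theta_{5,5}=\tfrac{15625}{512}$ and substitution into Theorem~\ref{t1} yields
\[\Lambda_{1}<\Lambda_{2}<0<\Lambda_{3}, \qquad \Lambda_{4}<0<\Lambda_{5}, \qquad \Lambda_{1}+\Lambda_{3}<0,\]
\[\Lambda_{3}+\Lambda_{4}>0, \qquad 2\Lambda_{3}+3\Lambda_{4}>0, \qquad \Lambda_{3}-\Lambda_{1}+\Lambda_{2}+2\Lambda_{4}>0;\]
these are the only facts about $AZI$ used below. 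Next I would record a per-segment bound. Writing $\Phi_{TI}(T_{n})=\sum_{i}\Phi_{TI}(S_{i})$, a terminal summand equals $\Lambda_{1}+\Lambda_{3}$, $\Lambda_{2}+\Lambda_{3}$, or $\Lambda_{3}$ according as the terminal segment has length $3$, $4$, or $\geq 5$, and an interior summand equals $\Lambda_{4}+\Lambda_{3}$, $\Lambda_{5}+\Lambda_{3}$, or $\Lambda_{3}$ according as the interior segment has length $4$, $5$, or $\geq 6$. From the sign pattern, a terminal summand is $\geq\Lambda_{1}+\Lambda_{3}$ with equality iff the segment has length $3$, and an interior summand is $\geq\Lambda_{4}+\Lambda_{3}$ with equality iff the segment has length $4$. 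Hence $\Phi_{TI}(T_{n})\geq s\Lambda_{3}+2\Lambda_{1}+(s-2)\Lambda_{4}$ whenever $T_{n}$ has $s\geq3$ segments, $\Phi_{TI}(T_{n})=\Lambda_{3}$ when $s=1$, and (using $l_{1}+l_{2}=n+2$ together with the fact that nonterminal segments have length $\geq4$ and terminal ones length $\geq3$) $\Phi_{TI}(T_{n})\geq 2\Lambda_{3}+\Lambda_{1}$ when $s=2$ and $n\geq7$.

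Suppose $n\geq9$ and let $\widetilde{T}_{n}$ minimise $\Phi_{TI}$, with $s$ segments. Since $T^{-}_{n}\in\mathfrak{T}_{n}$ has middle segment of length $n-2\geq7$, $\Phi_{TI}(T^{-}_{n})=3\Lambda_{3}+2\Lambda_{1}$, so $\Phi_{TI}(\widetilde{T}_{n})\leq 3\Lambda_{3}+2\Lambda_{1}$. Together with $\Lambda_{1}+\Lambda_{3}<0$ the bounds above rule out $s=1$ and $s=2$; and for $s\geq3$ they force $(s-3)\Lambda_{3}+(s-2)\Lambda_{4}\leq0$, which gives $s\leq4$ because $2\Lambda_{3}+3\Lambda_{4}>0$ and $\Lambda_{3}+\Lambda_{4}>0$. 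To exclude $s=4$ I would use the length relation $l_{1}+l_{2}+l_{3}+l_{4}=n+6\geq15$ with $l_{2},l_{3}\geq4$ and $l_{1},l_{4}\geq3$: if $l_{2}=l_{3}=4$ then $l_{1}+l_{4}\geq7$ forces a terminal segment of length $\geq4$, whence $\Phi_{TI}(\widetilde{T}_{n})\geq 4\Lambda_{3}+\Lambda_{1}+\Lambda_{2}+2\Lambda_{4}$; otherwise some interior segment has length $\geq5$ and $\Phi_{TI}(\widetilde{T}_{n})\geq 4\Lambda_{3}+2\Lambda_{1}+\Lambda_{4}$. Both right-hand sides exceed $3\Lambda_{3}+2\Lambda_{1}$ (respectively by $\Lambda_{3}-\Lambda_{1}+\Lambda_{2}+2\Lambda_{4}>0$ and $\Lambda_{3}+\Lambda_{4}>0$), contradicting minimality. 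Thus $s=3$; and a short case distinction on the length $l_{2}$ of the middle segment (length $4$, length $5$, length $\geq6$), using the equality conditions of the per-segment bound, shows that the value $3\Lambda_{3}+2\Lambda_{1}$ is attained among three-segment chains only when both terminal segments have length $3$ --- forcing $l_{2}=n-2$ --- i.e. only by $T^{-}_{n}$. This gives the second case of the statement together with uniqueness.

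For $4\leq n\leq8$ the claim is a finite verification. If $n\in\{4,5\}$ then $\mathfrak{T}_{n}=\{L_{n},Z_{n}\}$ and $\Phi_{TI}(Z_{n})<\Phi_{TI}(L_{n})=\Lambda_{3}$ by $\Lambda_{1}+\Lambda_{3}<0$ and $\Lambda_{2}<0$. If $n\in\{6,7,8\}$ then for each admissible number of segments $s\leq\lfloor n/2\rfloor$ one enumerates the finitely many length vectors $(l_{1},\dots,l_{s})$ with $l_{1}+\dots+l_{s}=n+2(s-1)$ subject to the length restrictions, evaluates $\Phi_{TI}$, and checks that the minimum is attained precisely at $Z_{n}$ --- namely $Z_{6}=T^{-}_{6}=(3,4,3)$, $Z_{7}=(3,4,4)$, $Z_{8}=(3,4,4,3)$ --- and lies strictly below the value of every other chain, in particular below $\Phi_{TI}(T^{-}_{n})=3\Lambda_{3}+2\Lambda_{1}$ when $n=7,8$. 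The conclusion then follows from Corollary~\ref{cr0}.

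The step I expect to be the crux is the exclusion of $s=4$ when $n\geq9$: for $AZI$ one has $\Lambda_{3}+2\Lambda_{4}<0$, so at $s=4$ the purely local per-segment bound $s\Lambda_{3}+2\Lambda_{1}+(s-2)\Lambda_{4}$ already falls below $\Phi_{TI}(T^{-}_{n})$, and one is forced to exploit the global constraint relating $l_{1},\dots,l_{s}$ to $n$ in order to defeat the zigzag-type competitors. This near-coincidence between the ``three segments, one long'' chain and the ``many short segments'' chain is exactly why the extremal chain switches from $Z_{n}$ to $T^{-}_{n}$ between $n=8$ and $n=9$, and correspondingly why the small values of $n$ have to be treated by a direct check.
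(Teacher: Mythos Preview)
Your argument is correct and follows the same overall scheme as the paper's proof: reduce to minimising $\Phi_{TI}$ via Corollary~\ref{cr0}, bound $\Phi_{TI}$ segment by segment, and then run a case analysis on the number $s$ of segments. The paper computes $\Lambda_{1},\dots,\Lambda_{5}$ numerically, verifies the statement by direct inspection for $n\le 10$, and then for $n\ge 11$ treats the cases $s=2,3,4$ and $s\ge 5$ separately, each time bounding $\Phi_{AZI}(S_{1})+\Phi_{AZI}(S_{s})$ and the interior summands from below and comparing with $\Phi_{AZI}(T^{-}_{n})\approx 3.0507$.

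Your write-up differs in two pleasant ways. First, you isolate exactly which inequalities among the $\Lambda_{i}$ are needed ($\Lambda_{1}<\Lambda_{2}<0<\Lambda_{3}$, $\Lambda_{4}<0<\Lambda_{5}$, $\Lambda_{1}+\Lambda_{3}<0$, $\Lambda_{3}+\Lambda_{4}>0$, $2\Lambda_{3}+3\Lambda_{4}>0$, $\Lambda_{3}-\Lambda_{1}+\Lambda_{2}+2\Lambda_{4}>0$) and argue with those rather than with decimal approximations; this makes the logical structure transparent and would apply verbatim to any index with the same sign pattern. Second, by exploiting the global length relation $\sum_{i}l_{i}=n+2(s-1)$ already at $n\ge 9$ you dispose of the $s=4$ competitors without pushing $n=9,10$ into the ``finite check'' bin, so your hand verification covers only $n\le 8$ instead of $n\le 10$. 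The paper's proof, on the other hand, is more explicit in the $s=3$ subcase analysis (your ``short case distinction on $l_{2}$'' is correct but a referee might ask you to spell out the subcase $l_{2}=4$, where one uses $l_{1}+l_{3}=n\ge 9$ to force a terminal segment of length $\ge 5$ and then $\Lambda_{4}>\Lambda_{1}$). Your closing remark identifying $s=4$ as the crux --- because $\Lambda_{3}+2\Lambda_{4}<0$ defeats the purely local bound --- nicely explains why the extremal chain switches from $Z_{n}$ to $T^{-}_{n}$ at $n=9$.
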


\begin{proof}
By routine computations, one have
\[\Lambda_{1}\approx-4.2147, \Lambda_{2}\approx-2.5597, \Lambda_{3}\approx3.8267, \Lambda_{4}\approx-2.2860, \Lambda_{5}\approx2.8333.\]
The result can be easily verified for $n\leq10$. So, we assume that $n\geq11$ and $T_{n}\ncong T^{-}_{n}$. After simple calculations one have $\Phi_{AZI}(T^{-}_{n})\approx3.0507$ and hence $\Phi_{AZI}(L_{n})>\Phi_{AZI}(T^{-}_{n})$. We discuss four cases.

\textit{Case 1.} If $s=2$, then at least one of $l_{1},l_{2}$ must be greater than 4, which implies that
\[\Phi_{AZI}(T_{n})\approx 7.6534-4.2147(\eta_{1}+\eta_{2})-2.5597(\xi_{1}+\xi_{2})\geq3.4387>\Phi_{AZI}(T^{-}_{n}).\]

\textit{Case 2.} If $s=3$, then the inequality $n\geq11$ implies that $l_{i}\geq5$ for at least one $i$ (where $i=1,2,3$). Here we consider two subcases:

\textit{Subcase 2.1.} If at least one of $l_{1},l_{3}$ is greater than 4, then
\[\Phi_{AZI}(S_{1})+\Phi_{AZI}(S_{3})\approx 7.6534-4.2147(\eta_{1}+\eta_{3})-2.5597(\xi_{1}+\xi_{3})\geq3.4387,\]
and
\[\Phi_{AZI}(S_{2})\approx 3.8267-2.286\xi_{2}+2.8333\sigma_{2}\geq1.5407.\]
This leads to
\[\Phi_{AZI}(T_{n})=\sum_{i=1}^{3}\Phi_{AZI}(S_{i})> \Phi_{AZI}(T^{-}_{n}).\]

\textit{Subcase 2.2.} If $l_{2}\geq5$, then at least one of $l_{1},l_{3}$ must be greater than 3 (since $T_{n}\ncong T^{-}_{n}$) which implies that
\[\Phi_{AZI}(S_{1})+\Phi_{AZI}(S_{3})\approx 7.6534-4.2147(\eta_{1}+\eta_{3})-2.5597(\xi_{1}+\xi_{3})\geq0.879,\]
and
\[\Phi_{AZI}(S_{2})\approx 3.8267-2.286\xi_{2}+2.8333\sigma_{2}\geq3.8267.\]
Hence, it follows that
\[\Phi_{AZI}(T_{n})=\sum_{i=1}^{3}\Phi_{AZI}(S_{i})> \Phi_{AZI}(T^{-}_{n}).\]

\textit{Case 3.} If $s=4$, then the inequality $n\geq11$ implies that $l_{i}\geq5$ for at least one $i$ (where $1\leq i\leq4$). We have two possibilities:

\textit{Subcase 3.1.} If at least one of $l_{1},l_{4}$ is greater than 4, then
\[\Phi_{AZI}(S_{1})+\Phi_{AZI}(S_{4})\approx 7.6534-4.2147(\eta_{1}+\eta_{4})-2.5597(\xi_{1}+\xi_{4})\geq3.4387,\]
and
\[\Phi_{AZI}(S_{2})+\Phi_{AZI}(S_{3})\approx 7.6534-2.286(\xi_{2}+\xi_{3})+2.8333(\sigma_{2}+\sigma_{3})\geq3.0814.\]
Hence
\[\Phi_{AZI}(T_{n})=\sum_{i=1}^{4}\Phi_{AZI}(S_{i})> \Phi_{AZI}(T^{-}_{n}).\]

\textit{Subcase 3.2.} If at least one of $l_{2},l_{3}$ is greater than 4, then
\[\Phi_{AZI}(S_{2})+\Phi_{AZI}(S_{3})\approx 7.6534-2.286(\xi_{2}+\xi_{3})+2.8333(\sigma_{2}+\sigma_{3})\geq5.3674,\]
and
\[\Phi_{AZI}(S_{1})+\Phi_{AZI}(S_{4})\approx 7.6534-4.2147(\eta_{1}+\eta_{4})-2.5597(\xi_{1}+\xi_{4})\geq-0.776.\]
Hence
\[\Phi_{AZI}(T_{n})=\sum_{i=1}^{4}\Phi_{AZI}(S_{i})> \Phi_{AZI}(T^{-}_{n}).\]

\textit{Case 4.} If $s\geq5$, then for $2\leq i\leq s-1$,
\[\Phi_{AZI}(S_{i})\approx 3.8267-2.286\xi_{i}+2.8333\sigma_{i}\geq1.5407,\]
and
\[\Phi_{AZI}(S_{1})+\Phi_{AZI}(S_{s})\approx 7.6534-4.2147(\eta_{1}+\eta_{s})-2.5597(\xi_{1}+\xi_{s})\geq-0.776.\]
Bearing in mind the fact $s\geq5$, one have
\[\Phi_{AZI}(T_{n})=\sum_{i=1}^{s}\Phi_{AZI}(S_{i})>\Phi_{AZI}(T^{-}_{n}).\]
In all cases, we arrive at $\Phi_{AZI}(T_{n})>\Phi_{AZI}(T^{-}_{n})$. Therefore, from Corollary \ref{cr0} the required result follows.
\end{proof}

The choice $\theta_{a,b}=\mid a-b\mid$ (respectively $\theta_{a,b}=ab$) in Equation (\ref{z}) gives the Albertson index $A$ (respectively second Zagreb index $M_{2}$). Let $\mathfrak{T}^{*}_{n}$ be the subclass of $\mathfrak{T}_{n}$ of all those triangular chains with $n\geq7$ triangles (where $n$ is odd) in which both terminal segments have length 3, exactly one nonterminal segment has length 5 and all the other nonterminal segments (if exist) have length 4.

\begin{cor}
Let $T_{n}$ be any triangular chain in the collection $\mathfrak{T}_{n}$.\\
\textit{1.} For the Albertson index $A$, the following inequality holds
\[10\leq A(T_{n})\leq
\begin{cases}
3n+2 & \text{if $n$ is even}, \\
3n+1 & \text{otherwise}.
\end{cases} \]
The upper bounds are attained if and only if $T_{n}\cong Z_{n}$ and the lower bound is attained if and only if $T_{n}\cong L_{n}$.\\
\textit{2.} For the second Zagreb index $M_{2}$, the following inequality holds
\[4(8n-9)\leq M_{2}(T_{n})\leq
\begin{cases}
128 & \text{if $n=5$}, \\
35n-45 & \text{if $n$ is even},\\
35n-46 & \text{otherwise}.
\end{cases}\]
The lower bound is attained if and only if $T_{n}\cong L_{n}$, the first and second upper bounds are attained if and only if $T_{n}\cong Z_{n}$, and the third upper bound is attained if and only if $T_{n}\in\mathfrak{T}^{*}_{n}$.

\end{cor}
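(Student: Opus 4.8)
The plan is to specialize Theorem~\ref{t1} and Corollary~\ref{cr0} to the two choices $\theta_{a,b}=|a-b|$ (which gives $A$) and $\theta_{a,b}=ab$ (which gives $M_{2}$), and then to optimize the structural parameter $\Phi_{TI}$ directly over $\mathfrak{T}_{n}$. The first step is a routine evaluation of the six constants of Theorem~\ref{t1}. For $A$ one obtains $\Lambda_{0}=2$, $\Lambda_{1}=-2$, $\Lambda_{2}=0$, $\Lambda_{3}=8$, $\Lambda_{4}=\Lambda_{5}=-2$; for $M_{2}$ one obtains $\Lambda_{0}=32n-43$, $\Lambda_{1}=-2$, $\Lambda_{2}=-1$, $\Lambda_{3}=7$, $\Lambda_{4}=-1$, $\Lambda_{5}=1$. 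Since $\Lambda_{3}>0$ in both cases, the hypotheses of Corollaries~\ref{cr1} and~\ref{cr2} are not met, so the extremal chains have to be located by hand. Evaluating Theorem~\ref{t1} at the one-segment chain already yields $A(L_{n})=\Lambda_{0}+\Lambda_{3}=10$ and $M_{2}(L_{n})=32n-36=4(8n-9)$, the claimed lower bounds; it remains to show that these are minimal and to determine the maxima.

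For the lower bounds I would use the additivity $\Phi_{TI}(T_{n})=\sum_{i=1}^{s}\Phi_{TI}(S_{i})$ together with the per-segment expressions recalled before Corollary~\ref{cr0}. With the values above, for $s\geq2$ every terminal summand is at least $\Lambda_{3}+\min\{\Lambda_{1},\Lambda_{2},0\}$ and every nonterminal summand is at least $\Lambda_{3}+\min\{\Lambda_{4},\Lambda_{5},0\}$, and in both indices these lower estimates are still large enough that $\sum_{i}\Phi_{TI}(S_{i})>\Lambda_{3}=\Phi_{TI}(L_{n})$ as soon as $s\geq2$. Hence $L_{n}$ is the unique minimizer, and Corollary~\ref{cr0} converts this into the inequalities $A(T_{n})\geq10$ and $M_{2}(T_{n})\geq4(8n-9)$ with the stated equality case.

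The core of the argument is the upper bound, where a purely per-segment estimate is useless because it only gives something of size $s\Lambda_{3}$ and $s$ is not fixed; one must record how many triangles a segment consumes. The extra ingredient is the structural identity $\sum_{i=1}^{s}l_{i}=n+2(s-1)$ (consecutive segments of a chain in $\mathfrak{T}_{n}$ overlap in two triangles), together with the length bounds $l_{1}\geq5-2\eta_{1}-\xi_{1}$, $l_{s}\geq5-2\eta_{s}-\xi_{s}$, $l_{i}\geq4$ for $2\leq i\leq s-1$, and $l_{i}\geq6$ for every nonterminal segment whose length is neither $4$ nor $5$. Writing $p_{5}$ for the number of nonterminal $5$-segments and $q$ for the number of nonterminal segments of length $\geq6$, these inequalities combine to give
\[2s\ \leq\ n-4+2(\eta_{1}+\eta_{s})+(\xi_{1}+\xi_{s})-p_{5}-2q .\]
Using $\sum_{i=2}^{s-1}\xi_{i}=s-2-p_{5}-q$ and $\sum_{i=2}^{s-1}\sigma_{i}=p_{5}$ to rewrite the closed forms $A(T_{n})=2+8s-2(\eta_{1}+\eta_{s})-2\sum_{i=2}^{s-1}(\xi_{i}+\sigma_{i})$ and $M_{2}(T_{n})=32n-43+7s-2(\eta_{1}+\eta_{s})-(\xi_{1}+\xi_{s})-\sum_{i=2}^{s-1}\xi_{i}+\sum_{i=2}^{s-1}\sigma_{i}$ (valid for $s\geq3$; the cases $s=1,2$ are immediate), and then substituting the displayed bound on $s$, leaves in each case a quantity that is affine in $n$ plus a correction built only from $\eta_{1},\eta_{s},\xi_{1},\xi_{s},p_{5},q$; maximizing this correction by inspection — it is largest when both terminal segments have length $3$ and $q=0$, and for $M_{2}$ also when $p_{5}$ is as small as a parity constraint on $2s$ allows (so $p_{5}=0$ for $n$ even and $p_{5}=1$ for $n$ odd) — produces the bounds $3n+2$ / $3n+1$ and $35n-45$ / $35n-46$ according to the parity of $n$. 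Tracking when every estimate used is tight then identifies the maximizer as $Z_{n}$, respectively a member of $\mathfrak{T}^{*}_{n}$.

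I expect the main obstacle to be the bookkeeping in this last step: getting the triangle-consumption identity and the length bounds exactly right, handling the fact that a terminal segment cannot be of length $3$ and of length $4$ simultaneously (so $\eta_{i}$ and $\xi_{i}$ trade off against each other), and chasing the equality conditions carefully enough to see that for $M_{2}$ with $n\geq7$ odd the optimum is the entire family $\mathfrak{T}^{*}_{n}$ rather than a single chain, while for $n=5$, where $\mathfrak{T}^{*}_{5}=\varnothing$ and $s\leq2$ is forced, the optimum reverts to $Z_{5}$ and produces the exceptional value $128$. The remaining small cases of $n$ are cleared by listing the finitely many chains in $\mathfrak{T}_{n}$, and a final application of Corollary~\ref{cr0} gives the stated inequalities for $A$ and $M_{2}$ themselves.
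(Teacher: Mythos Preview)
Your evaluation of the $\Lambda_i$'s and your treatment of the lower bounds are essentially identical to the paper's: both of you bound each $\Phi_{TI}(S_i)$ from below by $6$ (for $A$) or by $5$/$6$ (for $M_2$) and conclude that $s\geq2$ already forces $\Phi_{TI}(T_n)>\Lambda_3=\Phi_{TI}(L_n)$.

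For the upper bounds you diverge from the paper, and your route is genuinely different. The paper argues by contradiction on a putative maximizer $\widetilde{T}_n$: it exhibits explicit local modifications of the length vector (splitting off a terminal $3$-segment, shortening a long interior segment, etc.) that strictly increase $\Phi_{TI}$, and thereby forces $\widetilde{T}_n$ into the shape $Z_n$ or $\mathfrak{T}^{*}_n$ through a sequence of three claims. You instead introduce the global identity $\sum_{i}l_i=n+2(s-1)$, combine it with pointwise lower bounds on the $l_i$ to obtain the linear inequality $2s\leq n-4+2(\eta_1+\eta_s)+(\xi_1+\xi_s)-p_5-2q$, and then substitute this directly into the closed formula for $TI(T_n)$. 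After simplification this yields, e.g., $A(T_n)\leq 3n-6+4(\eta_1+\eta_s)+3(\xi_1+\xi_s)-3p_5-4q$ and $M_2(T_n)\leq 35n-53+4(\eta_1+\eta_s)+2(\xi_1+\xi_s)-p_5-5q$, from which the stated bounds and equality cases (including the parity constraint $p_5\equiv n\pmod 2$ when both terminals have length $3$ and $q=0$) fall out by inspection.

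Both arguments are correct. The paper's modification approach is more hands-on and reusable for other BID indices without recomputing anything, since it never needs the exact relationship between $s$ and $n$. Your approach is cleaner analytically and makes the parity split and the appearance of the whole family $\mathfrak{T}^{*}_n$ for odd $n\geq7$ more transparent, at the cost of relying on the overlap identity $\sum l_i=n+2(s-1)$, which you should state and justify carefully since it is not recorded in the paper.
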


\begin{proof}\textit{1.}
For the Albertson index $A$, one have $-\Lambda_{0}=\Lambda_{1}=\Lambda_{4}=\Lambda_{5}=-2$, $\Lambda_{2}=0$ and $\Lambda_{3}=8=\Phi_{A}(L_{n})$. Firstly, we establish the lower bound. Let $T_{n}\ncong L_{n}$, then for $s=2$ one have $\Phi_{A}(T_{n})=16-2(\eta_{1}+\eta_{2})\geq12>\Phi_{A}(L_{n})$. If $s\geq3$ then it follows that $\Phi_{A}(S_{1})=8-2\eta_{1}\geq6$ , $\Phi_{A}(S_{s})=8-2\eta_{s}\geq6$, $\Phi_{A}(S_{i})=8-2\xi_{i}-2\sigma_{i}\geq6$ where $2\leq i\leq s-1$ and therefore
\[\Phi_{A}(T_{n})=\sum_{i=1}^{s}\Phi_{AZI}(S_{i})\geq6s>\Phi_{A}(L_{n}).\]
Hence from Corollary \ref{cr0}, we have $A(T_{n})\geq A(L_{n})$ with equality if and only if $T_{n}\cong L_{n}$.

To establish the upper bound, let us choose $\widetilde{T}_{n}\in\mathfrak{T}_{n}$ such that $\Phi_{A}(\widetilde{T}_{n})$ is maximum. It can be easily checked that $\Phi_{A}(Z_{n})>\Phi_{A}(L_{n})$, which implies that $\widetilde{T}_{n}\not\cong L_{n}$. Suppose that $\widetilde{T}_{n}$ has length vector $(l_{1},l_{2},...,l_{s})$ where $s\geq2$.

\textit{Claim 1.} $l_{1},l_{s}\leq4$ and at least one of $l_{1},l_{s}$ is 3.\\
If at least one of $l_{1},l_{s}$ is greater than or equal to 5. Without loss of generality, suppose that $l_{1}\geq5$. Then the triangular chain ${T}^{(1)}_{n}$
with length vector $(3,l_{1}-1,l_{2},l_{3},...,l_{s})$ belongs to $\mathfrak{T}_{n}$ and $\Phi_{A}(\widetilde{T}_{n})-\Phi_{A}(T^{(1)}_{n})\leq-4,$ which is a contradiction to the maximality of $\Phi_{A}(\widetilde{T}_{n})$.
If $l_{1}=l_{s}=4$, then the triangular chain ${T}^{(2)}_{n}$ with length vector $(3,l_{1},l_{2},...,l_{s-1},3)$ is a member of $\mathfrak{T}_{n}$ and $\Phi_{A}(\widetilde{T}_{n})-\Phi_{A}(T^{(2)}_{n})=-2,$ which is again a contradiction.

\textit{Claim 2.} If one of $l_{1},l_{s}$ is 3 and the other is 4, then every nonterminal segment (if exists) has length 4.\\
Without loss of generality, we assume that $l_{1}=4$ and $l_{s}=3$. Suppose to the contrary that at least one nonterminal segment, say (without loss of generality) $l_{2}$ has length greater than or equal to 5. Then the triangular chain ${T}^{(3)}_{n}$ with length vector $(3,4,l_{2}-1,l_{3},l_{4},...,l_{s})$ is an element of $\mathfrak{T}_{n}$ and $\Phi_{A}(\widetilde{T}_{n})<\Phi_{A}(T^{(3)}_{n}),$ a contradiction.

\textit{Claim 3.} Every nonterminal segment (if exists) has length less than or equal to 5 and at most one nonterminal segment (if exists) has length 5.\\
If $\widetilde{T}_{n}$ has at least one nonterminal segment of length greater than or equal to 6. Without loss of generality, one can assume that $l_{2}\geq6$. Then the triangular chain ${T}^{(4)}_{n}$ with length vector $(l_{1},4,l_{2}-2,l_{3},l_{4},...,l_{s})$ belongs to $\mathfrak{T}_{n}$ and $\Phi_{A}(\widetilde{T}_{n})<\Phi_{A}(T^{(4)}_{n}),$ this contradicts the maximality of $\Phi_{A}(\widetilde{T}_{n})$. If there exist at least two nonterminal segments with length 5. Without loss of generality, suppose that $l_{2}=l_{3}=5$, then the triangular chain ${T}^{(5)}_{n}$ with length vector $(l_{1},4,4,4,l_{4},l_{5},...,l_{s})$ is a member of $\mathfrak{T}_{n}$ and $\Phi_{A}(\widetilde{T}_{n})<\Phi_{A}(T^{(5)}_{n})$, again a contradiction.

From Claim 1, Claim 2 and Claim 3 it follows that either $\widetilde{T}_{n}\cong Z_{n}$ or $\widetilde{T}_{n}\cong T^{*}_{n}\in\mathfrak{T}^{*}_{n}$. If $\widetilde{T}_{n}\cong T^{*}_{n}\in\mathfrak{T}^{*}_{n}$ then $n\geq7$ and $n$ is odd. But
\[\Phi_{A}(Z_{n})=3n-1>\Phi_{A}(T^{*}_{n})=3(n-1),\]
a contradiction. Therefore $\widetilde{T}_{n}\cong Z_{n}$. After simple calculations, one have
\[A(Z_{n})=
\begin{cases}
3n+2 & \text{if $n$ is even}, \\
3n+1 & \text{otherwise}.
\end{cases} \]
From Corollary \ref{cr0}, the desired result follows.\\

\textit{2.} For the second Zagreb index $M_{2}$, one have
\[\Lambda_{0}=32n-43, \ \Lambda_{1}=-2, \ \Lambda_{2}=\Lambda_{4}=-1, \ \Lambda_{3}=7, \ \Lambda_{5}=1.\]
Also, note that if $n\geq7$ and $n$ is odd then
\[\Phi_{M_{2}}(Z_{n})=3n-4 \ , \ \Phi_{M_{2}}(T^{*}_{n})=3(n-1).\]
Now, using the same technique that was used to prove the first part of the theorem, we arrive at the desired result.
\end{proof}

The choice $\theta_{a,b}= \sqrt{\frac{a+b-2}{ab}}$ in Equation (1.1) corresponds to the atom-bond connectivity ($ABC$) index. For the $ABC$ index, it can be easily verified that $-\Lambda_{1}-\Lambda_{3}<\Lambda_{5}$, $\Lambda_{i}$ is positive for $i=1,2,3,4$, $2\Lambda_{4}>\Lambda_{1}>\Lambda_{2}$ and $\Lambda_{1}+\Lambda_{5}<\Lambda_{2}+\Lambda_{4}$. On the other hand, it can be easily observed that if the condition $-\Lambda_{3}<\Lambda_{5}$ in Corollary \ref{cr2}(2) is replaced with $-\Lambda_{1}-\Lambda_{3}<\Lambda_{5}<0$, then the conclusion remains true and hence we have $ABC(T_{n})\leq TI(Z_{n})$ with equality if and only if $T_{n}\cong Z_{n}$.

\section{\textbf{Concluding Remarks}}

For any triangular chain $T_{n}\in\mathfrak{T}_{n}$, we have established a closed form formula given in Theorem \ref{t1} for calculating the BID indices. Then, using this formula, we have characterized the extremal triangular chains with respect to a variety of famous BID indices over the collection $\mathfrak{T}_{n}$. More precisely, among all the triangular chains in $\mathfrak{T}_{n}$, we have characterized the extremal ones for the sum-connectivity index, Randi\'{c} index, harmonic index, first geometric-arithmetic index, second Zagreb index, modified second Zagreb index, multiplicative sum Zagreb index and Albertson index. In addition, we have distinguish the triangular chains in $\mathfrak{T}_{n}$ with the minimum augmented Zagreb index and maximum atom-bond connectivity index. However, the problem of characterizing the triangular chains with maximum augmented Zagreb index and minimum atom-bond connectivity index in the collection $\mathfrak{T}_{n}$ remains open. Moreover, it seems to be interesting to extend the results of current study for the collection of all triangular chains.

\section{\textbf{Acknowledgements}}

The authors are very grateful to Professor Fuji Zhang for helpful discussion on $k$-polygonal chains and Professor Clive Elphick for providing the paper \cite{gutman}. The authors would also like to express their sincere gratitude to the anonymous referee for his/her valuable comments, which led to a number of improvements in the earlier version of the manuscript.

\end{document}